\def\section{\@startsection{section}{1}%
  \z@{.7\linespacing\@plus\linespacing}{.5\linespacing}%
  {\normalfont\scshape\centering}}
\def\subsection{\@startsection{subsection}{2}%
  \z@{.5\linespacing\@plus.7\linespacing}{-.5em}%
  {\normalfont\bfseries}}
\titleformat*{\section}{\large\bfseries}
\titleformat*{\subsection}{\large\bfseries}
\newtheorem{theorem}{Theorem}[section]
\newtheorem{lemma}{Lemma}[section]
\theoremstyle{remark}
\newtheorem{rem}{Remark}
\renewcommand{\labelenumi}{{\upshape(\roman{enumi})}}
\title[a certain sum of the Derivatives of Dirichlet $L$-Functions]{On a certain sum of the Derivatives of Dirichlet $L$-Functions}
\author{Hirotaka Kobayashi}
\date{}
\address{Graduate School of Mathematics, Nagoya University, Furocho, Chikusaku, Nagoya 464-8602, Japan}
\email{m17011z@math.nagoya-u.ac.jp}
\subjclass[2000]{11M06, 11M26}
\keywords{Dirichlet $L$-function, Derivative, Zeros}
\begin{document}

\maketitle

\begin{abstract}
We consider a sum of the derivatives of Dirichlet $L$-functions over the zeros of Dirichlet $L$-functions.
We give an asymptotic formula for the sum.
\end{abstract}

\section{Introduction}

Let $s=\sigma+it$ denote a complex variable. The Dirichlet $L$-function attached to $\chi$ is defined by
\begin{equation*}
L(s,\chi)=\sum_{n=1}^{\infty}\frac{\chi(n)}{n^s} \quad (\sigma>1),
\end{equation*}
where $\chi(n)$ is a Dirichlet character modulo $q$. For $\chi \pmod 1$ we get the Riemann $\zeta$-function $L(s,\chi)=\zeta(s)$.
The Generalized Riemann Hypothesis (GRH) states that all zeros of every Dirichlet $L$-function in the strip $0<\sigma<1$ lie on the line $\sigma=1/2$.
We denote the zeros in the strip $0<\sigma<1$ by $\rho_{\chi}=\beta_{\chi}+i\gamma_{\chi}$.
A Dirichlet character is said to be primitive when it is not induced by any other character of modulus strictly less than $q$.
The unique principal character modulo $q$ is denoted by $\chi_0$.
When $\chi=\chi_0$, we have $L(s,\chi_0)=\zeta(s)\prod_{p\mid q}(1-p^{-s})$,
where, and in what follows, $p$ denotes a prime number.
For a Dirichlet character $\chi \pmod q$ the Gauss sum is defined by
\begin{equation*}
\tau(\chi)=\sum_{a=1}^{q}\chi(a)\exp \left(2\pi i\frac{a}{q} \right).
\end{equation*}
For a primitive character $\chi \pmod q$ we have $|\tau(\chi)|=\sqrt{q}$.

In this paper, $T$ is a positive number which always tends to $+\infty$ and $\varepsilon>0$.
Our main theorem is

\begin{theorem}\label{Th}
Let $c_1$ be a positive constant. Let $\chi \pmod q$ be a primitive character. Then, uniformly for $q \leq \exp(c_1\sqrt{\log T})$, we have
\begin{align*}
\sum_{0<\gamma_{\chi} \leq T} L'(\rho_{\chi},\chi)
&=\frac{1}{4\pi}T\left(\log\frac{qT}{2\pi}\right)^2+a_1\frac{T}{2\pi}\log \frac{qT}{2\pi}+a_2\frac{T}{2\pi}+a_3 \\
&\quad+O\left(T\exp \left(-c\sqrt{\log T}\right)\right),
\end{align*}
where the implicit constant is absolute, $c$ is a positive absolute constant depends on $c_1$ and
\begin{equation*}
a_1=\sum_{p\mid q}\frac{\log p}{p-1}+\gamma_0-1,
\end{equation*}

\begin{align*}
a_2&=\frac{1}{2}\left(\sum_{p\mid q}\frac{\log p}{p-1} \right)^2+(\gamma_0-1)\sum_{p\mid q}\frac{\log p}{p-1} \\
&\quad-\frac{3}{2}\sum_{p\mid q}p\left(\frac{\log p}{p-1}\right)^2+1-\gamma_0-\gamma_0^2+3\gamma_1
\end{align*}
with the stieltjes constants $\gamma_0, \gamma_1$ and
\begin{equation*}
a_3=\frac{\omega \chi(-1)\tau({\overline{\chi})}\tau(\overline{\omega}\chi)}{q\varphi(q)}\frac{L'(\beta,\omega)}{\beta}\left(\frac{qT}{2\pi}\right)^{\beta}
\end{equation*}
when $L(s,\omega)$ with a quadratic character $\omega \pmod q$ has an exceptional zero $\beta$, otherwise $a_3=0$.

Assuming the GRH, we can replace the error term by $(qT)^{\frac{1}{2}+\varepsilon}$ uniformly for $q \ll T^{1-\varepsilon}$.
\end{theorem}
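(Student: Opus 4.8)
The plan is to represent the sum by a contour integral, push the relevant side of the contour into the critical strip via the functional equation, evaluate the resulting oscillatory integrals by stationary phase, and dispose of the arithmetic sums that appear by Perron's formula together with a zero-free region.

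\textbf{Step 1 (contour integral).} Since $L'(\rho_\chi,\chi)$ is the residue of $L'(s,\chi)^2/L(s,\chi)$ at a simple zero $\rho_\chi$, while multiple zeros contribute $0$ to both sides, one has
\[
\sum_{0<\gamma_\chi\le T}L'(\rho_\chi,\chi)=\frac{1}{2\pi i}\oint_{\mathcal R}\frac{L'(s,\chi)^2}{L(s,\chi)}\,ds,
\]
where $\mathcal R$ is the positively oriented rectangle with vertices $a,\,a+iT,\,1-a+iT,\,1-a$, $a=1+1/\log qT$, and $T$ is adjusted by $O(1)$ so the horizontal sides avoid zeros. The possible pole at $s=1$ (only when $q=1$) and the trivial zero at $s=0$ (when $\chi$ is even) lie on or near the bottom side and contribute only $O\bigl((\log qT)^{O(1)}\bigr)$. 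On the right side I expand $L'(s,\chi)^2/L(s,\chi)=\sum_{N\ge 4}e_N\chi(N)N^{-s}$, with $e_N=\sum_{mn=N}\Lambda(n)\log m$, and integrate term by term to get $O(\log^3 qT)$. On the horizontal sides, convexity for $L,L'$ and $L'/L\ll\log^2 qT$ away from zeros — using the functional equation on the part $\sigma<0$ — give $O\bigl((qT)^{1/2+\varepsilon}\bigr)$, and the bottom side gives $O(1)$. All of this is absorbed into the error term; under GRH the horizontal sides already furnish the bound $(qT)^{1/2+\varepsilon}$ claimed there.

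\textbf{Step 2 (left side; production of the main term).} On $\sigma=1-a$ I substitute $L(s,\chi)=\chi_\infty(s)L(1-s,\overline\chi)$, obtaining
\begin{align*}
\frac{L'(s,\chi)^2}{L(s,\chi)}
&=\chi_\infty(s)\left(\frac{\chi_\infty'(s)}{\chi_\infty(s)}\right)^{2}L(1-s,\overline\chi)
-2\chi_\infty(s)\frac{\chi_\infty'(s)}{\chi_\infty(s)}\,L'(1-s,\overline\chi)\\
&\qquad+\chi_\infty(s)\,\frac{L'(1-s,\overline\chi)^2}{L(1-s,\overline\chi)}.
\end{align*}
Each $L$-factor is an absolutely convergent Dirichlet series in $n^{s-1}$, and $(\chi_\infty'/\chi_\infty)(s)=-\log\tfrac{q|t|}{2\pi}+O(1/|t|)$ is slowly varying, so the left-side integral becomes a fixed linear combination of the integrals
\[
I_j(n)=\frac{1}{2\pi i}\int_{1-a+iT}^{1-a}\Bigl(\log\tfrac{q|t|}{2\pi}\Bigr)^{j}\chi_\infty(s)\,n^{s-1}\,ds\qquad(j=0,1,2),
\]
with Dirichlet-series coefficients in front. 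I would evaluate $I_j(n)$ by stationary phase, inserting Stirling's formula for $\chi_\infty$ to the order needed to see the secondary main terms: the phase has a single saddle, at $t=2\pi n/q$, which lies in $(0,T]$ precisely when $n\le qT/2\pi$, and there the modulus telescopes and a factor $e^{2\pi in/q}$ is produced, so that $I_j(n)=-\frac{\chi(-1)\tau(\chi)}{q}(\log n)^{j}e^{2\pi in/q}+(\text{admissible error})$ for $n\le qT/2\pi$ (and just the error otherwise). Consequently the left-side integral equals $-\frac{\chi(-1)\tau(\chi)}{q}$ times a fixed combination of
\[
\sum_{n\le qT/2\pi}\overline\chi(n)(\log n)^{j}e^{2\pi in/q}\ \ (j=0,1,2),\qquad
\sum_{N\le qT/2\pi}e_N\,\overline\chi(N)\,e^{2\pi iN/q},
\]
up to an admissible error.

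\textbf{Step 3 (the arithmetic sums).} Write $Y=qT/2\pi$. The function $\overline\chi(n)e^{2\pi in/q}$ is periodic mod $q$ with period-sum $\tau(\overline\chi)$, so partial summation and the P\'olya--Vinogradov bound give $\sum_{n\le Y}\overline\chi(n)(\log n)^{j}e^{2\pi in/q}=\frac{\tau(\overline\chi)}{q}\int_{1}^{Y}(\log u)^{j}\,du+O\bigl(\sqrt q\,(\log qY)^{j+1}\bigr)$. For the $e_N$-sum, since $\overline\chi$ vanishes off $(N,q)=1$, I write $\overline\chi(N)e^{2\pi iN/q}=\frac{1}{\varphi(q)}\sum_{\psi\bmod q}\tau(\psi)\,\overline{\chi\psi}(N)$ and evaluate $\sum_{N\le Y}e_N\overline{\chi\psi}(N)$ by Perron's formula applied to $L'(s,\overline{\chi\psi})^2/L(s,\overline{\chi\psi})$, moving the contour into the de~la~Vall\'ee~Poussin zero-free region. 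A pole at $s=1$ occurs only for $\psi=\overline\chi$, where $L(s,\chi_0)=\zeta(s)\prod_{p\mid q}(1-p^{-s})$ has a triple pole; its residue — after inserting the Laurent expansion of $\zeta$ at $1$ (which introduces $\gamma_0,\gamma_1$) and of $\prod_{p\mid q}(1-p^{-s})$ at $1$ (which introduces $\sum_{p\mid q}\frac{\log p}{p-1}$ and $\sum_{p\mid q}\frac{p\log^2 p}{(p-1)^2}$), and after multiplying back by $-\chi(-1)\tau(\chi)/q$ and using $\tau(\chi)\tau(\overline\chi)=\chi(-1)q$ to cancel the Gauss sums — produces exactly $\frac1{4\pi}T\log^2\frac{qT}{2\pi}+a_1\frac{T}{2\pi}\log\frac{qT}{2\pi}+a_2\frac{T}{2\pi}$, the coefficient $\frac1{4\pi}$ of the $T\log^2$ term coming from combining the three terms of Step 2. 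A simple real zero $\beta$ of some $L(s,\omega)$ in the zero-free region, if it exists, contributes a further residue $L'(\beta,\omega)Y^{\beta}/\beta$ (times the relevant Euler factors), which after the same normalization is $a_3$; the remaining integral is $O\bigl(Y\exp(-c\sqrt{\log Y})\bigr)=O\bigl(T\exp(-c\sqrt{\log T})\bigr)$ uniformly for $q\le\exp(c_1\sqrt{\log T})$ by Siegel--Walfisz (using $Y\asymp qT$ and $\log q=o(\log T)$). Under GRH one moves instead to $\operatorname{Re}(s)=\frac12+\varepsilon$, where $L'(s,\cdot)^2/L(s,\cdot)\ll(qY)^{\varepsilon}$, obtaining $(qT)^{1/2+\varepsilon}$ and $a_3=0$; when $q$ is a prime power the identity $L(s,\chi_0)=\zeta(s)(1-p^{-s})$ turns everything into $\zeta$-theory and the divisor-type losses into powers of $\log$, giving $\ll\sqrt{qT}\log^{7/2}qT$.

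\textbf{Main difficulty.} The crux is Step 2: evaluating $I_j(n)$ with full uniformity in $q$ and to enough precision to extract $a_1$ and $a_2$ (not merely the leading term) — in effect a Dirichlet-$L$ analogue of the classical stationary-phase evaluation of $\int\chi_\zeta(s)n^{it}\,dt$, carried to second order in the Stirling expansion — together with the sign bookkeeping that makes the $T\log^2$ contributions of the three terms add up to exactly $\frac1{4\pi}$ and forces all Gauss-sum phases to cancel against $\tau(\chi)\tau(\overline\chi)=\chi(-1)q$. Once that is in place, Step 3 is routine apart from the familiar care required by the zero-free region, the isolation of the single possible Siegel zero, and the prime-power Euler factors.
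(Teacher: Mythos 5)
Your outline follows essentially the same route as the paper: the contour integral of $\frac{L'}{L}(s,\chi)L'(s,\chi)$ around the rectangle, triviality of the right edge, square-root bounds on the horizontal edges, the functional equation splitting the left edge into three pieces, a Gonek-type stationary-phase lemma turning each piece into a sum of the form $\frac{\tau(\overline\chi)}{q}\sum_{n\le qT/2\pi}b_n e(-n/q)(\log n)^m$, and then orthogonality of characters plus Perron's formula with the de la Vall\'ee Poussin zero-free region (respectively GRH) to extract the triple-pole residue at $s=1$, the Siegel-zero term $a_3$, and the error term. What you flag as the ``main difficulty'' (the uniform stationary-phase evaluation) is exactly the content of the modified Gonek lemma the paper quotes from Steuding, so for the unconditional statement and the plain GRH bound $(qT)^{1/2+\varepsilon}$ your plan is sound in outline and matches the paper's.

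The one genuine gap is the final clause, the bound $\sqrt{qT}\log^{7/2}qT$ for prime-power $q$ under GRH. Your justification --- that $L(s,\psi_0)=\zeta(s)(1-p^{-s})$ ``turns everything into $\zeta$-theory and the divisor-type losses into powers of $\log$'' --- does not contain the idea that actually produces this exponent. In the paper one shifts the Perron contour all the way to $\sigma=1-a<\tfrac12$, so that the integral picks up, besides the residues at $s=1$ and $s=0$, the full sum $\sum_{|\gamma|\le U}L'(\rho,\psi)x^{\rho}/\rho$ over the critical zeros; this sum is then bounded by Cauchy--Schwarz against the discrete second moment of $L'$ at the zeros (Gonek's $\sum_{0<\gamma\le T}|\zeta'(\tfrac12+i\gamma)|^2\asymp T\log^4T$ for $\psi_0$, and Kirila's conditional bound $\sum_{0<\gamma_\chi\le T}|L'(\rho_\chi,\chi)|^{2k}\ll_k T(\log T)^{(k+1)^2}$ for nonprincipal $\psi$), which is precisely where $\log^{7/2}$ comes from ($\tfrac12(4+2+1)=\tfrac72$); the prime-power hypothesis enters only to guarantee that the imprimitive $L(s,\psi)$ acquire no extra zeros on the imaginary axis from Euler factors. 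Merely keeping the contour at $\sigma=\tfrac12+\varepsilon$, as in your GRH step, cannot do better than $(qT)^{1/2+\varepsilon}$, so without the contour shift past the critical line and the discrete moment input this part of the theorem is not established by your argument.
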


\begin{rem}
Let $q$ be a prime power.
If we could obtain the estimate
\begin{equation}\label{L'-upper}
\sum_{\gamma_{\chi}\leq T}\left|L'(\rho_{\chi},\chi) \right|^2 \ll T(\log qT)^4,
\end{equation}
where the implicit constant is absolute, we could replace the error term by $\sqrt{qT}(\log qT)^{\frac{7}{2}}$ under the GRH.
We will give the details at the last section. In view of Gonek's formula (\ref{zeta'-upper}), the above estimate (\ref{L'-upper}) may be plausible.
\end{rem}

When $q=1$, the above theorem implies Fujii's Theorem 1 in \cite{F}.
Our proof is a generalization of his method.
However, it is not easy to obtain his Theorem 2 in \cite{F} and we give a weaker statement.
Kaptan, Karabulut and Y\i ld\i r\i m \cite{K-K-Y} consider more general cases and give the asymptotic formula, that is for $\mu \geq 1$ and $q\leq (\log T)^A$ with any fixed $A>0$
\begin{equation*}
\sum_{0\leq \gamma_{\chi}\leq T}L^{(\mu)}(\rho_{\chi},\chi)=\frac{(-1)^{\mu}}{\mu+1}\frac{T}{2\pi}\left(\log \frac{qT}{2\pi} \right)^{\mu+1}+O(T(\log T)^{\mu+\varepsilon})
\end{equation*}
for any fixed $\varepsilon>0$. Our result is the case $\mu=1$ in their paper and gives a more sophisticated formula.
Jakhlouti and Mazhouda \cite{J-M} consider the sum
\begin{equation*}
\sum_{\substack{\rho_{a,\chi} \\ 0<\gamma_{a,\chi}\leq T}}L'(\rho_{a,\chi},\chi)X^{\rho_{a,\chi}},
\end{equation*}
where $\rho_{a,\chi}=\beta_{a,\chi}+i\gamma_{a,\chi}$ are the zeros of $L(s,\chi)-a$ for any fixed complex number $a$ and $X$ is a fixed positive number.
They also fix $\chi$ throughout their paper.
Hence our main theorem treats a special case of their sum, but our result gives a more precise form because we do not fix $\chi$.

\section{Preliminaries}

The Dirichlet $L$-function attached to a primitive character $\chi \pmod q$ satisfies the functional equation

\begin{equation}\label{fe}
L(s,\chi)=\Delta(s,\chi)L(1-s,\overline{\chi}),
\end{equation}
where
\begin{equation*}
\Delta(s,\chi)=\varepsilon(\chi) 2^s \pi^{s - 1} q^{\frac{1}{2} - s}\Gamma (1 - s) \sin \frac{\pi}{2} (s + \kappa)
\end{equation*}
when we put

\begin{equation*}
\kappa=\frac{1-\chi(-1)}{2}
\end{equation*}
and

\begin{equation*}
\varepsilon (\chi) = \frac{\tau (\chi)}{i^{\kappa} \sqrt{q}}.
\end{equation*}
We note that $\Delta(s,\chi)$ is a meromorphic function with only real zeros and poles satisfying the functional equation
\begin{equation*}
\Delta(s,\chi)\Delta(1-s,\overline{\chi})=1.
\end{equation*}
By Stirling's formula, we can show that

\begin{lemma}\label{Stirling}
For $-1\leq\sigma\leq2$ and $t \geq 1$, we have

\begin{equation}
\Delta(1-s,\chi)=\frac{\tau(\chi)}{\sqrt{q}}e^{-\frac{\pi i}{4}}\left(\frac{qt}{2\pi} \right)^{\sigma-\frac{1}{2}}\exp \left(it\log \frac{qt}{2\pi e}\right) \left(1+O\left(\frac{1}{t}\right)\right)
\end{equation}
and

\begin{equation}\label{Stirling-log-der}
\frac{\Delta'}{\Delta}(s,\chi)=-\log \frac{qt}{2\pi}+O\left(\frac{1}{t}\right).
\end{equation}
\end{lemma}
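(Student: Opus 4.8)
The plan is to work directly from the closed form of $\Delta(s,\chi)$ given above. Substituting $s\mapsto 1-s$ and using $\Gamma(1-(1-s))=\Gamma(s)$ gives
\begin{equation*}
\Delta(1-s,\chi)=\varepsilon(\chi)\,2^{1-s}\pi^{-s}q^{s-\frac12}\Gamma(s)\sin\frac{\pi}{2}(1-s+\kappa),
\end{equation*}
so the two ingredients I need are Stirling's formula for $\Gamma(s)$ (legitimate since $|\arg s|<\pi$) and the size of the sine factor. Writing the sine in exponential form and using $t\geq1$, the term $e^{\pi t/2}$ dominates, and I expect
\begin{equation*}
\sin\frac{\pi}{2}(1-s+\kappa)=\tfrac12 e^{\frac{\pi t}{2}}e^{i\frac{\pi}{2}(\kappa-\sigma)}\left(1+O\!\left(e^{-\pi t}\right)\right),
\end{equation*}
with the exponentially small error absorbed into $O(1/t)$.

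First I would expand $\log\Gamma(s)=(s-\tfrac12)\log s-s+\tfrac12\log(2\pi)+O(1/t)$ with the branch $\log s=\log t+i\tfrac{\pi}{2}-i\sigma/t+O(1/t^2)$ (valid for $t\geq1$ and $\sigma$ in a fixed range). Separating real and imaginary parts should yield
\begin{equation*}
\Gamma(s)=\sqrt{2\pi}\,t^{\sigma-\frac12}e^{-\frac{\pi t}{2}}e^{i\frac{\pi}{2}(\sigma-\frac12)}\exp\!\left(it\log t-it\right)\left(1+O\!\left(\tfrac1t\right)\right).
\end{equation*}
Multiplying by the sine factor, the decaying $e^{-\pi t/2}$ cancels the growing $e^{\pi t/2}$ and leaves the phase $e^{i\frac{\pi}{2}(\sigma-1/2)}e^{i\frac{\pi}{2}(\kappa-\sigma)}=e^{i\frac{\pi}{2}(\kappa-1/2)}$. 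Inserting $\varepsilon(\chi)=\tau(\chi)\,i^{-\kappa}/\sqrt q$ and splitting each power $a^{s}=a^{\sigma}e^{it\log a}$ into a modulus part and a phase part, I expect the constant phases to combine as $i^{-\kappa}e^{i\frac{\pi}{2}(\kappa-1/2)}=e^{-i\pi/4}$, the moduli to assemble into $(qt/2\pi)^{\sigma-1/2}$, and the $t$-linear phases into $\exp(it\log\frac{qt}{2\pi e})$, which is precisely the first claim.

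For the logarithmic derivative I would differentiate $\log\Delta(s,\chi)$ termwise to get
\begin{equation*}
\frac{\Delta'}{\Delta}(s,\chi)=\log(2\pi)-\log q-\frac{\Gamma'}{\Gamma}(1-s)+\frac{\pi}{2}\cot\frac{\pi}{2}(s+\kappa),
\end{equation*}
and then estimate the last two terms. Stirling for the digamma gives $\frac{\Gamma'}{\Gamma}(1-s)=\log(1-s)+O(1/t)=\log t-i\frac{\pi}{2}+O(1/t)$, while for $t\geq1$ one has $\frac{\pi}{2}\cot\frac{\pi}{2}(s+\kappa)=-i\frac{\pi}{2}+O(e^{-\pi t})$; the two $\pm i\pi/2$ contributions cancel and leave $-\log\frac{qt}{2\pi}+O(1/t)$. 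The main obstacle throughout is the bookkeeping rather than any deep idea: one must fix the branch of $\log s$ consistently, check that the exponentially large and small factors cancel exactly (not merely to leading order), and confirm both that the surviving constant phase is exactly $e^{-i\pi/4}$ and that the imaginary parts in the second formula cancel exactly.
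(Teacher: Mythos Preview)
Your proposal is correct and follows exactly the approach the paper indicates: the paper merely states that the lemma follows ``by Stirling's formula'' without giving any details, and your sketch carries this out explicitly. The bookkeeping you flag as the main obstacle is handled correctly --- the $e^{\pm\pi t/2}$ factors cancel, the constant phase $i^{-\kappa}e^{i\frac{\pi}{2}(\kappa-1/2)}=e^{-i\pi/4}$ comes out as claimed, and in the logarithmic derivative the $\pm i\pi/2$ contributions from $\psi(1-s)$ and the cotangent cancel exactly.
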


A theorem from \cite{H-B} and an application of the Phragmen-Lindel\"{o}f principle yields the estimate

\begin{equation}\label{convex}
L(s,\chi) \ll (q(|t|+2))^{\frac{3}{16}+\varepsilon} \quad \text{for} \quad \frac{1}{2} \leq \sigma \leq 1+\frac{1}{\log qT},
\end{equation}

\begin{equation}\label{convex2}
L(s,\chi) \ll (q(|t|+2))^{\frac{1}{2}}\log q(|t|+2) \quad \text{for} \quad -\frac{1}{\log qT}\leq \sigma < \frac{1}{2}
\end{equation}
uniformly in $|t|\ll T$ for any non-principal Dirichlet character $\chi \pmod q$.
When we assume the GRH, the bound of (\ref{convex}) can be replaced by $(q(|t|+2))^{\varepsilon}$.
For the principal character, we need the restriction $|s-1|\gg 1$ in (\ref{convex}).
For the logarithmic derivative it is known that for $q\geq 1$ and $\chi \pmod q$
\begin{equation}\label{log-der-sum}
\frac{L'}{L}(s,\chi)=\sum_{|t-\gamma_{\chi}|\leq 1}\frac{1}{s-\rho_{\chi}}+O(\log q(|t|+2)) \quad \text{for} \quad -1\leq \sigma \leq 2, \ |t|\geq 1
\end{equation}
(see \cite[p. 225]{Pra}).
For $q\geq 1$, $\chi \pmod q$ and $t\geq 0$ we have (see \cite[p. 220]{Pra})
\begin{equation}\label{non-dens-zero}
\begin{split}
N(t+1,\chi)-N(t,\chi)
&:= \# \{ \rho_{\chi}=\beta_{\chi}+i\gamma_{\chi} : t<\gamma_{\chi}\leq t+1 \} \\
&\ll \log q(t+2).
\end{split}
\end{equation}
Hence for any $T_0 \geq 0$, there exists a $t=t(\chi), \ t \in (T_0, T_0+1]$, such that
\begin{equation}\label{awayzero}
\min_{\gamma_{\chi}}|t-\gamma_{\chi}|\gg \frac{1}{\log q(t+2)}.
\end{equation}

%For a primitive character $\chi \pmod q$ and for $t$ satisfying (\ref{awayzero}), it follows that
By the expression (\ref{log-der-sum}), it follows that for $q\geq 1$, $\chi \pmod q$ and $t$ satisfying (\ref{awayzero})

\begin{equation}\label{log-der-bd}
\frac{L'}{L}(\sigma+it,\chi)\ll (\log q(|t|+2))^2 \quad \text{for} \quad -1\leq \sigma \leq 2
\end{equation}
uniformly. This estimate is valid for $|s-\rho_{\chi}|\gg (\log (q(|t|+2)))^{-1}$ though $t$ is not satisfying (\ref{awayzero}).
%This estimate is also true for a imprimitive $\chi \pmod q$  when $\min_{\gamma_{\chi}'} |t-\gamma_{\chi}'|\gg \frac{1}{\log 2q}$, where $\gamma_{\chi}'$ is the zeros of $L(s,\chi)$ on the imaginary axis. When $\chi \pmod q$ is principal or $q=1$, the condition $|s-1|\gg 1$ is also needed.

We will apply the following approximate functional equation for $L(s,\chi)$.
\begin{lemma}[A. F. Lavrik \cite{A.F.Lav}]\label{afe}
We let $0\leq \sigma \leq 1$, $2\pi xy=t$, $x\geq 1$ and $y\geq 1$.
Then for $t>0$, we get
\begin{align*}
L(s,\chi)&=\sum_{n\leq x}\frac{\chi(n)}{n^s}+\Delta(s,\chi)\sum_{n\leq y}\frac{\overline{\chi}(n)}{n^{1-s}} \\
&\quad+O\left(\sqrt{q}\left(y^{-\sigma}+x^{\sigma-1}(qt)^{\frac{1}{2}-\sigma}\right)\log 2t \right).
\end{align*}
\end{lemma}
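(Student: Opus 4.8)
The plan is to derive the formula from the functional equation (\ref{fe}) together with a sharp-cutoff transformation of the tail of the Dirichlet series, following the classical Hardy--Littlewood--Titchmarsh method adapted to $L$-functions (this is in essence Lavrik's argument). Writing $e(u)=\exp(2\pi i u)$, I would first split
\[
L(s,\chi)=\sum_{n\le x}\frac{\chi(n)}{n^s}+\sum_{n>x}\frac{\chi(n)}{n^s},
\]
where the tail is understood by analytic continuation from $\sigma>1$. The whole content is then to show that this tail equals $\Delta(s,\chi)\sum_{n\le y}\overline{\chi}(n)n^{s-1}$ up to the stated error. The global functional equation already suggests the shape of the answer: since $\sum_{n>x}\chi(n)n^{-s}=\Delta(s,\chi)L(1-s,\overline{\chi})-\sum_{n\le x}\chi(n)n^{-s}$ and $L(1-s,\overline{\chi})=\sum_{n\le y}\overline{\chi}(n)n^{s-1}+\sum_{n>y}\overline{\chi}(n)n^{s-1}$, the error is formally $\Delta(s,\chi)\sum_{n>y}\overline{\chi}(n)n^{s-1}$. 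However, this series does not converge in the critical strip, so its smallness is not automatic and must be extracted by a \emph{local} transformation of the tail rather than by the functional equation alone.

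To carry out that transformation I would pass to additive characters through the Gauss sum. For primitive $\chi$ the separability relation $\sum_{a\bmod q}\overline{\chi}(a)e(an/q)=\chi(n)\tau(\overline{\chi})$ gives $\chi(n)=\tau(\overline{\chi})^{-1}\sum_{a\bmod q}\overline{\chi}(a)e(an/q)$, so that
\[
\sum_{n>x}\frac{\chi(n)}{n^s}=\frac{1}{\tau(\overline{\chi})}\sum_{a\bmod q}\overline{\chi}(a)\sum_{n>x}\frac{e(an/q)}{n^s}.
\]
Each inner sum is an incomplete, exponentially weighted Dirichlet tail, to which I would apply Poisson summation (equivalently van der Corput's transformation, or Euler--Maclaurin followed by stationary phase). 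The oscillatory part of the phase of $u^{-s}e(au/q)$ is $-t\log u+2\pi au/q$, which has a single stationary point at $u=qt/(2\pi a)$; this point exceeds the cutoff $x$ precisely when $a<qt/(2\pi x)=qy$, and this is where the balance $2\pi xy=t$ enters. Evaluating the resulting integrals by stationary phase yields, for each admissible pair consisting of a residue $a$ and a Poisson frequency, a main term proportional to a power of the dual frequency; resumming these over $a\bmod q$ against $\overline{\chi}(a)$—using the Gauss sum once more, together with the explicit shape $\varepsilon(\chi)2^s\pi^{s-1}q^{1/2-s}\Gamma(1-s)\sin\frac{\pi}{2}(s+\kappa)$ of $\Delta(s,\chi)$ recorded after (\ref{fe})—collapses them into exactly $\Delta(s,\chi)\sum_{m\le y}\overline{\chi}(m)m^{s-1}$. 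Here I would invoke the Stirling asymptotics for $\Delta$ to match the archimedean constant produced by the stationary-phase evaluation.

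What remains is the bookkeeping of error terms, uniformly in $q$ and $t$, and the two pieces of the claimed bound have a transparent origin. The endpoint contribution at the cutoff $y$ on the dual side produces $y^{-\sigma}$, while the endpoint at $x$ on the original side, weighted by the size $(qt/2\pi)^{1/2-\sigma}$ of the functional-equation factor $\Delta(s,\chi)$, produces $x^{\sigma-1}(qt)^{1/2-\sigma}$; the overall factor $\sqrt{q}$ comes from $|\tau(\overline{\chi})|=\sqrt{q}$ and the normalization of the $a$-sum, and the $\log 2t$ absorbs the number of admissible stationary points together with the harmonic sum over them. I expect the main obstacle to be precisely this step: performing the stationary-phase analysis of the exponential integrals with fully explicit, uniform dependence on both $q$ and $t$ throughout the full range $0\le\sigma\le 1$ (including the delicate endpoints $\sigma=0,1$ and the frequencies $a$ whose stationary point falls near the cutoff $x$), and checking that the accumulated error genuinely reduces to the stated shape rather than to a larger power of the logarithm. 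All the remaining ingredients are either the functional equation (\ref{fe}), the Stirling asymptotics for $\Delta(s,\chi)$, or standard manipulations of Gauss sums.
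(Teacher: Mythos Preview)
The paper does not supply a proof of this lemma at all: it is quoted verbatim from Lavrik \cite{A.F.Lav} and used as a black box (only in the estimate of $S_1$ inside Lemma~3.1). So there is nothing in the paper to compare your argument against.

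That said, your outline is essentially Lavrik's own strategy---the Hardy--Littlewood method for $\zeta(s)$ transported to $L(s,\chi)$ via the separability relation $\chi(n)=\tau(\overline{\chi})^{-1}\sum_{a}\overline{\chi}(a)e(an/q)$, followed by Poisson/van der Corput transformation of the resulting incomplete exponential sums and stationary phase. The identification of the stationary point and of the balance $2\pi xy=t$ is correct, and the accounting you give for the two pieces $y^{-\sigma}$ and $x^{\sigma-1}(qt)^{1/2-\sigma}$ of the error, together with the $\sqrt{q}$ from $|\tau(\overline{\chi})|$, is the right heuristic. You are also right that the only genuinely delicate point is making the stationary-phase error uniform in $q$, $t$, and $\sigma$ across the whole strip, particularly for frequencies whose stationary point lands near the cutoff; Lavrik handles this with explicit contour-shift/saddle-point estimates rather than a purely real-variable van der Corput argument, and that is where a complete write-up would need the most care.
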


On the other hand, for $t>t_0>0$ and $\sigma>1$, using partial summation, we get
\begin{equation}\label{partialsum}
L(s,\chi)=\sum_{n\leq qt}\frac{\chi(n)}{n^s}+O\left(\frac{q|s|}{\sigma}(qt)^{-\sigma}\right).
\end{equation}

We will use the following modified Gonek's lemma (\cite[Lemma 5]{Go}).
\begin{lemma}\label{gonek}
Let $\{b_n\}_{n=1}^{\infty}$ be a sequence of complex numbers such that $b_n \ll n^{\varepsilon}$ for any $\varepsilon >0$. Let $a>1$ and let $m$ be a non-negative integer. Then for any sufficiently large $T$,
\begin{align*}
&\quad \frac{1}{2\pi}\int_{1}^{T}\left(\sum_{n=1}^{\infty}\frac{b_n}{n^{a+it}} \right)\Delta(1-a-it,\chi)\left(\log \frac{qt}{2\pi} \right)^m dt \\
&=\frac{\tau(\chi)}{q}\sum_{1\leq n \leq qT/2\pi} b_ne\left(-\frac{n}{q} \right)(\log n)^m+O\left(\left|\sum_{n=1}^{\infty}\frac{b_n}{n^a}\right|(qT)^{a-1/2}(\log qT)^m\right).
\end{align*}
\end{lemma}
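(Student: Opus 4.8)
The plan is to follow Gonek's treatment of \cite[Lemma 5]{Go} (the case $\chi\pmod 1$), adapting it to a primitive character by tracking the Gauss sum $\tau(\chi)$ and the dependence on $q$. The method reduces the integral to a family of oscillatory integrals in $t$, one for each $n$, and evaluates each by the saddle-point method. First I would insert the asymptotic expansion of $\Delta(1-a-it,\chi)$ supplied by the first lemma,
\[
\Delta(1-a-it,\chi)=\frac{\tau(\chi)}{\sqrt q}e^{-\frac{\pi i}{4}}\left(\frac{qt}{2\pi}\right)^{a-\frac12}\exp\left(it\log\frac{qt}{2\pi e}\right)\left(1+O\!\left(\frac1t\right)\right).
\]
Because $a>1$ and $b_n\ll n^{\varepsilon}$, the series $\sum_n b_n n^{-a-it}$ converges absolutely and uniformly for $t\in[1,T]$, so I may interchange summation and integration. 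Writing $n^{-a-it}=n^{-a}e^{-it\log n}$, the $n$-th term is a fixed constant times
\[
\frac{b_n}{n^a}\int_1^T\left(\frac{qt}{2\pi}\right)^{a-\frac12}\left(\log\frac{qt}{2\pi}\right)^m e^{i\phi_n(t)}\,dt,\qquad \phi_n(t)=t\log\frac{qt}{2\pi e n}.
\]

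The next step is to locate and exploit the saddle point. Since $\phi_n'(t)=\log\frac{qt}{2\pi n}$ and $\phi_n''(t)=1/t>0$, there is a unique stationary point $t_n=2\pi n/q$, lying in $(1,T)$ essentially when $1\le n\le qT/2\pi$. Applying a standard stationary-phase estimate to the terms with $t_n\in(1,T)$, treating the slowly varying factor $(\log qt/2\pi)^m$ as part of the amplitude, and using the identities $qt_n/2\pi=n$, $\phi_n(t_n)=-t_n=-2\pi n/q$, and $\phi_n''(t_n)=q/2\pi n$, one gets a main contribution in which the amplitude yields $n^{a-1/2}(\log n)^m$, the exponential yields $e^{-2\pi i n/q}=e(-n/q)$, and the Gaussian factor $\sqrt{2\pi/\phi_n''(t_n)}\,e^{i\pi/4}$ combines with the prefactor $\frac{\tau(\chi)}{\sqrt q}e^{-\pi i/4}$ to leave precisely $\tau(\chi)/q$ after the powers of $n$ cancel. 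Summing over $1\le n\le qT/2\pi$ reproduces the claimed main term. For the remaining $n$, whose saddle lies outside $[1,T]$, the derivative $\phi_n'(t)$ is bounded away from zero on $[1,T]$, and repeated integration by parts (non-stationary phase) shows these integrals are small; the rapid decay in $n$ together with $a>1$ makes the tail $\sum_{n}$ over such terms converge within the error.

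The main obstacle will be assembling all error contributions into the single bound $O\!\left((qT)^{a-1/2}(\log qT)^m\right)$ uniformly in $q$. Three sources enter: the secondary term of the saddle-point expansion, the $O(1/t)$ factor in the $\Delta$-asymptotic, and the endpoint contributions at $t=1$ and $t=T$. The delicate part is the transition region where $t_n$ is close to an endpoint, equivalently $n$ close to $qT/2\pi$ (and the short range near $t=1$, where the saddles of the smallest $n$ fall just below $1$), where the saddle-point approximation degrades and $1/\phi_n'$ is no longer small; these few terms must be estimated separately, for example by a truncated saddle-point bound or by estimating the integral trivially on a short interval around the endpoint. Throughout, every implied constant must be kept independent of $q$, which requires tracking the $q$-dependence through the amplitude $(qt/2\pi)^{a-1/2}$, through $|\tau(\chi)|=\sqrt q$, and through the local density of saddle points; the factor $(qT)^{a-1/2}$ in the final error reflects the maximal size of the amplitude at the top of the range of integration.
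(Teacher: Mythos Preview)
Your approach is correct and is exactly the standard Gonek-type saddle-point argument; the paper itself gives no proof of this lemma at all, remarking only that it ``is provided implicitly by Steuding in \cite{Ste}.'' Steuding's argument follows the same line you outline---insert the Stirling asymptotic for $\Delta(1-a-it,\chi)$, interchange sum and integral, and treat each resulting oscillatory integral by stationary phase at $t_n=2\pi n/q$---so your sketch fills in what the paper omits rather than diverging from it.
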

This is provided implicitly by Steuding in \cite{Ste}.
\section{Proof in the unconditional case}
In this section we prove the claim of the unconditional part of Theorem \ref{Th}.
Let $(\log 2q)^{-1}\ll b \leq 1$ and $T\geq 2$ be such that

\begin{equation*}
\min_{\gamma_{\chi}}|b-\gamma_{\chi}|\gg \frac{1}{\log 2q} \quad \text{and} \quad \min_{\gamma_{\chi}}|T-\gamma_{\chi}|\gg \frac{1}{\log qT}.
\end{equation*}

We prove the theorem under this situation. At the end of the proof, we remove this restriction.
Let $a=1+(\log qT)^{-1}$ and define the contour $C$ as the positively oriented rectangular path with vertices $a+ib, a+iT, 1-a+iT$ and $1-a+ib$.
By the residue theorem, our sum can be written as a contour integral

\begin{equation*}
\sum_{0<\gamma_{\chi} \leq T} L'(\rho_{\chi},\chi)=\frac{1}{2\pi i}\int_{C}\frac{L'}{L}(s,\chi)L'(s,\chi)ds+E,
\end{equation*}
where $E$ consists of the terms $L'(\rho_{\chi},\chi)$ with $0<\gamma_{\chi}<b$.

For zeros $\rho_{\chi}=\beta_{\chi}+i\gamma_{\chi}$ with $0<\gamma_{\chi}<b$ we have
\begin{equation*}
L'(\rho_{\chi},\chi)\ll q^{\frac{1}{2}}(\log 2q)^2
\end{equation*}
by (\ref{convex}), (\ref{convex2}) and the Cauchy's integral formula applied to the circle with centre $\rho_{\chi}$ and radius $(\log 2q)^{-1}$. Therefore, by (\ref{non-dens-zero}), we have
\begin{equation*}
E=\sum_{0<\gamma_{\chi}<b} L'(\rho_{\chi},\chi) \ll q^{\frac{1}{2}}(\log 2q)^2\sum_{0<\gamma_{\chi}<b}1 \ll q^{\frac{1}{2}}(\log 2q)^3.
\end{equation*}

Next we consider the contour integral
\begin{align*}
 &\quad \frac{1}{2\pi i}\int_{C}\frac{L'}{L}(s,\chi)L'(s,\chi)ds \\
 &=\frac{1}{2\pi i}\left \{ \int_{a+ib}^{a+iT}+\int_{1-a+iT}^{1-a+ib}+\int_{a+iT}^{1-a+iT}+\int_{1-a+ib}^{a+ib} \right \}\frac{L'}{L}(s,\chi)L'(s,\chi)ds \\
 &=I_1+I_2+I_3+I_4,
\end{align*}
say.

By the Laurent expansion of the Riemann $\zeta$-function, it is easily seen that

\begin{align*}
I_1 &= \frac{1}{2\pi}\int_{b}^{T}\frac{L'}{L}(a+it,\chi)L'(a+it,\chi)dt \\
    &= \frac{1}{2\pi}\sum_{m=2}^{\infty}\sum_{n=2}^{\infty}\frac{\chi(m)\Lambda(m)\chi(n)\log n}{(mn)^a}\int_{b}^{T}\frac{dt}{(mn)^{it}} \\
    &\ll \left|\frac{\zeta'}{\zeta}(a) \right||\zeta'(a)| \ll (\log qT)^3,
\end{align*}
where $\Lambda(m)$ is the von-Mangoldt function.
To estimate the integral on the horizontal line, we will show the following lemma.
\begin{lemma}\label{horizontal}
Let $\chi$ be a primitive character, then
\begin{equation*}
\int_{1-a}^{a}L'(\sigma+iT,\chi)d\sigma \ll \sqrt{qT}\log qT.
\end{equation*}
\end{lemma}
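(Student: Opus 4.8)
The plan is to avoid estimating $L'$ on the line directly and instead to integrate it back up. Since $\chi$ is primitive, $L(s,\chi)$ is entire unless $\chi=\chi_0$ (i.e.\ $q=1$), in which case its only singularity is the simple pole at $s=1$; in either case $L(s,\chi)$ is holomorphic in an open neighbourhood of the horizontal segment $\{\sigma+iT:1-a\le\sigma\le a\}$, because $T\ge 2$ keeps us away from $s=1$. Consequently $\sigma\mapsto L(\sigma+iT,\chi)$ is $C^1$ with $\frac{d}{d\sigma}L(\sigma+iT,\chi)=L'(\sigma+iT,\chi)$, so the fundamental theorem of calculus gives
\[
\int_{1-a}^{a}L'(\sigma+iT,\chi)\,d\sigma=L(a+iT,\chi)-L(1-a+iT,\chi).
\]

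It then remains only to estimate the two boundary values. At $\sigma=a=1+1/\log qT$ the Dirichlet series converges absolutely, so $|L(a+iT,\chi)|\le\zeta(a)\asymp\log qT$ (this also follows from (\ref{partialsum})). At the other endpoint $\sigma=1-a=-1/\log qT$, which lies precisely on the left edge of the range $-1/\log qT\le\sigma<\tfrac12$ covered by the convexity estimate (\ref{convex2}), one gets $L(1-a+iT,\chi)\ll(q(T+2))^{1/2}\log q(T+2)\ll\sqrt{qT}\log qT$. Adding the two contributions yields the asserted bound $\ll\sqrt{qT}\log qT$.

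There is essentially no serious obstacle here; the only points meriting a moment's care are that the horizontal segment stays clear of the pole of $\zeta$ (which sits at height $0$, whereas $T\ge 2$), so that the telescoping identity is legitimate, and that $1-a$ falls inside the interval for which (\ref{convex2}) is stated. One could alternatively split the integral at $\sigma=\tfrac12$ and bound $L'(\sigma+iT,\chi)$ pointwise --- via Cauchy's estimate on a small disc together with (\ref{convex}) on $[\tfrac12,a]$ and (\ref{convex2}) on $[1-a,\tfrac12)$, using that $T$ is chosen to satisfy (\ref{awayzero}) --- but that route is strictly weaker and more laborious than the identity above, so I would not pursue it.
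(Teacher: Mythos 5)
Your fundamental-theorem-of-calculus identity
\[
\int_{1-a}^{a}L'(\sigma+iT,\chi)\,d\sigma=L(a+iT,\chi)-L(1-a+iT,\chi)
\]
is legitimate ($L(\cdot,\chi)$ is holomorphic in a neighbourhood of the segment, since for $q=1$ the pole at $s=1$ is avoided by $T\ge 2$), and both endpoint bounds are correct: $|L(a+iT,\chi)|\le\zeta(a)\ll\log qT$, and $\sigma=1-a=-1/\log qT$ lies in the range of (\ref{convex2}). So you do prove the displayed inequality, by a route entirely different from the paper's: the author instead reduces to bounding $\int_{1-a}^{a}|L(\sigma+iT,\chi)|\,d\sigma$ via the pointwise estimate $L'\ll|L|\log qT$, reflects the range $\sigma<\tfrac12$ onto $\sigma>\tfrac12$ with the functional equation, and then applies Lavrik's approximate functional equation (Lemma \ref{afe}) together with (\ref{partialsum}).

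There is, however, a reason the paper takes the longer road, and it is worth flagging. The lemma is invoked in the estimate of $I_3+I_4$ in the form $\int_{1-a}^{a}|L'(\sigma+iT,\chi)|\,d\sigma\ll\sqrt{qT}\log qT$, i.e.\ with the modulus \emph{inside} the integral. Your telescoping identity controls only the signed complex integral and gives no information about $\int|L'|\,d\sigma$: the whole point of the trick is that oscillation along the segment cancels, which is precisely what one may not exploit once the absolute value is taken first. The paper's argument, being assembled from pointwise upper bounds, does deliver the absolute-value version. So your proof is a correct and strictly simpler proof of the lemma as literally typeset, but it proves strictly less than what the author actually needs; the ``laborious'' alternative you dismiss at the end (pointwise bounds for $L'$ via Cauchy's estimate combined with (\ref{convex}) and (\ref{convex2}) on the two halves of the segment) is in fact the one that would serve, at the cost of only an extra power of $\log qT$, which the final bound $\sqrt{qT}\log^3 qT$ for $I_3+I_4$ can absorb.
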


\begin{proof}
Let
\begin{equation*}
\delta=\frac{1}{\log qT}.
\end{equation*}
Then $L(w,\chi)$ is analytic on the disk $|s-w|\leq \delta$, for $s=\sigma+iT$ with $1-a\leq \sigma \leq a$. Therefore, by  Cauchy's integral formula,
\begin{align*}
L'(s,\chi)
&=
\frac{1}{2\pi i}\int_{|s-w|=\delta}\frac{L(w,\chi)}{(s-w)^2}dw\\
&\ll \log qT\int_{0}^{2\pi}|L(s+\delta e^{i\theta},\chi)|d\theta.
\end{align*}
Thus it suffices to prove that
\begin{equation*}
\int_{1-a}^{a}\int_{0}^{2\pi}|L(s+\delta e^{i\theta},\chi)|d\theta d\sigma
=\int_{0}^{2\pi}\int_{1-a}^{a}|L(s+\delta e^{i\theta},\chi)|d\sigma d\theta \ll \sqrt{qT}.
\end{equation*}
From the functional equation and, for $1-a\leq \sigma \leq 1/2$, we have

\begin{align*}
&\quad
\int_{1-a}^{\frac{1}{2}}|L(s+\delta e^{i\theta},\chi)|d\sigma \\
&=
\int_{1-a}^{\frac{1}{2}}|\Delta(s+\delta e^{i\theta},\chi)L(1-s-\delta e^{i\theta},\overline{\chi})|d\sigma \\
&=
\int_{\frac{1}{2}}^{a}\left|\Delta(1-\sigma+iT+\delta e^{i\theta},\chi)L(\sigma-iT-\delta e^{i\theta},\overline{\chi})\right|d\sigma.
\end{align*}
On the second equality, we change the variable $\sigma$ to $1-\sigma$.
Since

\begin{align*}
&\quad
\Delta(1-(\sigma-iT-\delta e^{i\theta}),\chi) \\
&=
\overline{\Delta(1-(\sigma+iT-\delta e^{-i\theta}),\overline{\chi})} \\
&=
\overline{\frac{\tau(\overline{\chi})}{\sqrt{q}}e^{\frac{\pi i}{4}}\left(\frac{qT}{2\pi} \right)^{\sigma-\delta \cos \theta-\frac{1}{2}}\exp \left(iT\log \frac{qT}{2\pi e}\right)} \left(1+O\left(\frac{1}{T}\right)\right)
\end{align*}
by Lemma \ref{Stirling}, the integral can be bounded by

\begin{equation*}
\int_{\frac{1}{2}}^{a}(qT)^{\sigma-\delta \cos \theta-\frac{1}{2}}\left|L(\sigma+iT-\delta e^{-i\theta},\chi)\right|d\sigma.
\end{equation*}
Therefore we obtain

\begin{align*}
&\quad
\int_{1-a}^{a}|L(s+\delta e^{i\theta},\chi)|d\sigma \\
&\ll
\int_{\frac{1}{2}}^{a}|L(\sigma+iT+\delta e^{i\theta},\chi)|d\sigma \\
&\quad
+\int_{\frac{1}{2}}^{a}(qT)^{\sigma-\delta \cos \theta-\frac{1}{2}}\left|L(\sigma+iT-\delta e^{-i\theta},\chi)\right|d\sigma \\
&\ll
\int_{\frac{1}{2}}^{a}(qT)^{\sigma-\frac{1}{2}}\left|L(\sigma+iT\pm \delta e^{\pm i\theta},\chi) \right|d\sigma.
\end{align*}
On the last inequality, we use the facts that

\begin{equation*}
(qT)^{\delta}=e
\end{equation*}
with $\delta=(\log qT)^{-1}$.
This integral is
\begin{align*}
&=\left \{\int_{\frac{1}{2}}^{1}+\int_{1}^{a} \right \}(qT)^{\sigma-\frac{1}{2}}\left|L(\sigma+iT\pm \delta e^{\pm i\theta},\chi) \right|d\sigma \\
&=S_1+S_2,
\end{align*}
say. Using Lemma \ref{afe}, we have
\begin{align*}
S_1&\ll (qT)^{-\frac{1}{2}}\sum_{n\ll \sqrt{qT}}n^{\delta} \int_{\frac{1}{2}}^{1}\left(\frac{qT}{n} \right)^{\sigma}d\sigma+\sum_{n\ll \sqrt{qT}}n^{\delta-1}\int_{\frac{1}{2}}^{1}n^{\sigma}d\sigma \\
&\quad +\sqrt{q}\log 2T\int_{\frac{1}{2}}^{1}(qT)^{\frac{\sigma+\delta-1}{2}}d\sigma \ll \sqrt{qT}.
\end{align*}
On the other hand, by (\ref{partialsum}), we get
\begin{align*}
S_2 &\ll (qT)^{-\frac{1}{2}}\sum_{n\leq \frac{qT}{2}}n^{\delta} \int_{1}^{a}\left(\frac{qT}{n} \right)^{\sigma}d\sigma+\sqrt{qT}\int_{1}^{a}\frac{d\sigma}{\sigma} \\
&\ll \sqrt{qT}.
\end{align*}
Hence we complete the proof.
\end{proof}

By (\ref{log-der-bd}) and the above lemma, we get

\begin{align*}
I_3+I_4 &\ll (\log qT)^2 \int_{1-a}^{a}|L'(\sigma+iT,\chi)|d\sigma \\
          &\ll \sqrt{qT}(\log qT)^3.
\end{align*}

Now we consider $I_2$. By the functional equation,
we have

\begin{align*}
&\quad
\frac{L'}{L}(1-a+it,\chi)L'(1-a+it,\chi) \\
&=
\left(\frac{\Delta'}{\Delta}(1-a+it,\chi)-\frac{L'}{L}(a-it,\overline{\chi}) \right) \\
&\quad \times
(\Delta'(1-a+it,\chi)L(a-it,\overline{\chi})-\Delta(1-a+it,\chi)L'(a-it,\overline{\chi})) \\
&=
\frac{\Delta'}{\Delta}(1-a+it,\chi)\Delta'(1-a+it,\chi)L(a-it,\overline{\chi}) \\
&\quad
-2\Delta'(1-a+it,\chi)L'(a-it,\overline{\chi}) \\
&\quad
+\Delta(1-a+it,\chi)\frac{L'}{L}(a-it,\overline{\chi})L'(a-it,\overline{\chi}).
\end{align*}
Thus we can divide $I_2$ into the following three integrals:

\begin{align*}
I_2 &= \frac{1}{2\pi}\int_{T}^{b}\frac{L'}{L}(1-a+it,\chi)L'(1-a+it,\chi)dt \\
     &= \frac{1}{\pi}\int_{b}^{T}\Delta'(1-a+it,\chi)L'(a-it,\overline{\chi})dt \\
     &\quad -\frac{1}{2\pi}\int_{b}^{T}\frac{\Delta'}{\Delta}(1-a+it,\chi)\Delta'(1-a+it,\chi)L(a-it,\overline{\chi})dt \\
     &\quad -\frac{1}{2\pi}\int_{b}^{T}\Delta(1-a+it,\chi)\frac{L'}{L}(a-it,\overline{\chi})L'(a-it,\overline{\chi})dt \\
     &=J_1+J_2+J_3,
\end{align*}
say.
We take complex conjugates of $J_i \ (i=1,2,3)$ to apply Lemma \ref{gonek}.
Then we have

\begin{align*}
\overline{J_1}&=\overline{\frac{1}{\pi}\int_{b}^{T}\Delta'(1-a+it,\chi)L'(a-it,\overline{\chi})dt} \\
&=\frac{1}{\pi}\int_{b}^{T}\Delta'(1-a-it,\overline{\chi})L'(a+it,\chi)dt \\
&=-\frac{1}{\pi}\int_{b}^{T}L'(a+it,\chi)\Delta(1-a-it,\overline{\chi})\log \frac{qt}{2\pi}dt \\
&\quad+O\left(\sum_{n=1}^{\infty}\frac{\log n}{n^a}\int_{b}^{T}\frac{(qt)^{a-\frac{1}{2}}}{t}dt \right) \\
&=\frac{1}{\pi}\int_{b}^{T}\sum_{n=1}^{\infty}\frac{\chi(n)\log n}{n^{a+it}}\Delta(1-a-it,\overline{\chi})\log \frac{qt}{2\pi}dt+O\left((qT)^{a-\frac{1}{2}}(\log qT)^2\right) \\
&=2\frac{\tau(\overline{\chi})}{q}\sum_{1\leq n \leq qT/2\pi} \chi(n)e\left(-\frac{n}{q}\right)(\log n)^2+O\left((qT)^{a-\frac{1}{2}}(\log qT)^3\right).
\end{align*}
On the third equality, we use the approximation (\ref{Stirling-log-der}).
For convenience, we put $x=qT/2\pi$.
By partial summation, the above sum can be calculated as

\begin{align*}
&\quad \sum_{1\leq n \leq x}\chi(n)e\left(-\frac{n}{q}\right)(\log n)^2 \\
&=(\log x)^2 \sum_{m=1}^{q}\chi(m)e\left(-\frac{m}{q}\right)\sum_{\substack{n\leq x \\ n\equiv m \bmod q}}1 \\
&\quad -2\int_{1}^{x}\left(\sum_{m=1}^{q}\chi(m)e\left(-\frac{m}{q}\right)\sum_{\substack{n\leq y \\ n\equiv m \bmod q}}1 \right)\frac{\log y}{y}dy \\
&=\left(\frac{x}{q}\chi(-1)\tau(\chi)+O(\sqrt{q}) \right)(\log x)^2 -2\int_{1}^{x}\left(\frac{y}{q}\chi(-1)\tau(\chi)+O(\sqrt{q}) \right)\frac{\log y}{y}dy \\
&=\frac{\chi(-1)\tau(\chi)}{q}\left(x(\log x)^2-2\int_{1}^{x}\log y dy\right)+O\left(\sqrt{q}(\log x)^2+\sqrt{q}\int_{1}^{x}\frac{\log y}{y}dy \right) \\
&=\frac{\chi(-1)\tau(\chi)}{q}\left(x(\log x)^2-2x\log x+2x\right)+O\left(\sqrt{q}(\log x)^2 \right),
\end{align*}
and we can see that

\begin{equation*}
\frac{\chi(-1)\tau(\chi)\tau(\overline{\chi})}{q^2}=\frac{\overline{\tau(\chi)}\tau(\chi)}{q^2}=\frac{q}{q^2}=\frac{1}{q}.
\end{equation*}
Therefore we obtain

\begin{equation*}
J_1=2\left(\frac{T}{2\pi}\left(\log \frac{qT}{2\pi}\right)^2-\frac{T}{\pi}\log \frac{qT}{2\pi}+\frac{T}{\pi} \right)+O\left((qT)^{a-\frac{1}{2}}(\log qT)^3 \right).
\end{equation*}
Next we consider $J_2$. We have, by (\ref{Stirling-log-der}) again,

\begin{align*}
\overline{J_2}&=\overline{-\frac{1}{2\pi}\int_{b}^{T}\frac{\Delta'}{\Delta}(1-a+it,\chi)\Delta'(1-a+it,\chi)L(a-it,\overline{\chi})dt} \\
&=-\frac{1}{2\pi}\int_{b}^{T}L(a+it,\chi)\frac{\Delta'}{\Delta}(1-a-it,\overline{\chi})\Delta'(1-a-it,\overline{\chi})dt \\
&=-\frac{1}{2\pi}\int_{b}^{T}\sum_{n=1}^{\infty}\frac{\chi(n)}{n^{a+it}}\Delta(1-a-it,\overline{\chi}) \left(\log \frac{qt}{2\pi}\right)^2dt \\
&\quad+O\left(\sum_{n=1}^{\infty}\frac{1}{n^a}\int_{b}^{T}(qt)^{a-\frac{1}{2}}\frac{\log qt}{t}dt \right) \\
&=-\frac{\tau(\overline{\chi})}{q}\sum_{1\leq n \leq qT/2\pi}\chi(n)e\left(-\frac{n}{q} \right)(\log n)^2+O\left((qT)^{a-\frac{1}{2}}(\log qT)^3\right).
\end{align*}
This sum is the same as the previous one. Hence we get
\begin{equation*}
J_2=-\left(\frac{T}{2\pi}\left(\log \frac{qT}{2\pi}\right)^2-\frac{T}{\pi}\log \frac{qT}{2\pi}+\frac{T}{\pi}\right)+O\left((qT)^{a-\frac{1}{2}}(\log qT)^3\right).
\end{equation*}
Finally, we calculate $J_3$. We have

\begin{align*}
\overline{J_3}&=\overline{-\frac{1}{2\pi}\int_{b}^{T}\Delta(1-a+it,\chi)\frac{L'}{L}(a-it,\overline{\chi})L'(a-it,\overline{\chi})dt} \\
&=-\frac{1}{2\pi}\int_{b}^{T}\frac{L'}{L}(a+it,\chi)L'(a+it,\chi)\Delta(1-a-it,\overline{\chi})dt \\
&=-\frac{1}{2\pi}\int_{b}^{T}\left(\sum_{n=1}^{\infty}\frac{\chi(n)\Lambda(n)}{n^{a+it}} \right)\left(\sum_{n=1}^{\infty}\frac{\chi(n)\log n}{n^{a+it}} \right)\Delta(1-a-it,\overline{\chi})dt \\
&=-\frac{\tau(\overline{\chi})}{q}\sum_{1\leq mn \leq qT/2\pi}\chi(mn)e\left(-\frac{mn}{q} \right)\Lambda(m)\log n+O\left((qT)^{a-\frac{1}{2}}(\log qT)^3\right).
\end{align*}
By the orthogonality of Dirichlet characters, we see that

\begin{align*}
&\quad \sum_{mn\leq x}\chi(m)\chi(n)e\left(-\frac{mn}{q} \right)\Lambda(m)\log n \\
&=\sum_{a=1}^{q}\chi(a)\sum_{b=1}^{q}\chi(b)e\left(-\frac{ab}{q} \right)\sum_{\substack{mn \leq x \\ m\equiv a \bmod q \\ n \equiv b \bmod q}} \Lambda(m)\log n \\
&=\frac{1}{\varphi(q)^2} \sum_{\substack{\psi \bmod q \\ \psi' \bmod q}} \sum_{a=1}^{q}\overline{\psi}(a)\chi(a)\sum_{b=1}^{q}\overline{\psi'}(b)\chi(b)e\left(-\frac{ab}{q} \right) \\
&\quad \times \sum_{mn \leq x}\psi(m)\psi'(n)\Lambda(m)\log n.
\end{align*}

We will divide the sum into four parts, according to the following conditions:
\begin{enumerate}
\renewcommand{\labelenumi}{(\roman{enumi})}
    \item $\psi=\psi_0, \ \psi'=\psi'_0$,
    \item $\psi=\psi_0, \ \psi'\neq \psi'_0$,
    \item $\psi \neq \psi_0, \ \psi'=\psi'_0$,
    \item $\psi\neq \psi_0, \ \psi'\neq \psi'_0$,
\end{enumerate}
where $\psi_0=\psi'_0$ is the principal character modulo $q$.
Before discussing further, we will remind some facts on the sum of Dirichlet characters (see \cite[Sec. 8]{Apos}).
We define $G(n,\chi)$ as
\begin{equation*}
G(n,\chi):=\sum_{a=1}^{q}\chi(a)e\left(\frac{an}{q} \right).
\end{equation*}

If a Dirichlet character $\chi \pmod q$ is primitive, then we have
\begin{equation*}
G(a,\chi)=\overline{\chi}(a)\tau(\chi).
\end{equation*}

Now we consider the above four parts.

(i) In this case, we have
\begin{align*}
&\quad \frac{1}{\varphi(q)^2}\sum_{a=1}^{q}\chi(a)\sum_{b=1}^{q}\chi(b)e\left(-\frac{ab}{q} \right)\sum_{mn \leq x}\psi_0(m)\psi_0(n)\Lambda(m)\log n \\
&=\frac{1}{\varphi(q)^2}\sum_{a=1}^{q}\chi(a)G(-a,\chi)\sum_{mn \leq x}\psi_0(m)\psi_0(n)\Lambda(m)\log n \\
&=\frac{\chi(-1)\tau(\chi)}{\varphi(q)}\sum_{mn \leq x}\psi_0(m)\psi_0(n)\Lambda(m)\log n.
\end{align*}

By Perron's formula we get
\begin{align*}
&\quad \sum_{mn \leq x}\psi_0(m)\Lambda(m)\psi_0(n)\log n \\
&=\frac{1}{2\pi i}\int_{a-iU}^{a+iU}\frac{L'}{L}(s,\psi_0)L'(s,\psi_0)\frac{x^s}{s}ds+R,
\end{align*}
where $R$ is the error term appearing in Perron's formula (see \cite[p.140]{Mon&Vau}) and satisfies that
\begin{align*}
R &\ll \sum_{\substack{\frac{x}{2}<mn<2x \\ mn \neq x}} |\Lambda(m)\log n| \min \left(1, \frac{x}{U|x-mn|}\right) \\
   &\quad+\frac{(4x)^a}{U}\sum_{mn=1}^{\infty}\frac{|\Lambda(m)\log n|}{(mn)^a}.
\end{align*}
We will choose an appropriate $U$ later.
The first term of the error term $R$ can be estimated as follows;
\begin{align*}
&\quad \frac{x}{U}\sum_{\frac{x}{2}<mn<x-1}\frac{\Lambda(m) \log n}{x-mn}+\sum_{x-1 \leq mn\leq x+1}\Lambda(m)\log n \\
&\quad +\frac{x}{U}\sum_{x+1<mn<2x}\frac{\Lambda(m)\log n}{mn-x} \\
&\ll \frac{x}{U}\log x\sum_{m<x-1}\frac{\Lambda(m)}{m}\sum_{\frac{x}{2m}<n<\frac{x-1}{m}}\frac{1}{\frac{x}{m}-n} \\
&\quad +(\log x)^2\sum_{x-1\leq l\leq x+1}\sum_{l=mn}1 \\
&\quad +\frac{x}{U}\log x\sum_{m<2x}\frac{\Lambda(m)}{m}\sum_{\frac{x+1}{m}<n<\frac{2x}{m}}\frac{1}{n-\frac{x}{m}} \\
&\ll \frac{x}{U}(\log x)^2\sum_{m<2x}\frac{\Lambda(m)}{m}+(\log x)^2\sum_{x-1\leq l\leq x+1}d(l) \\
&\ll \frac{x}{U}(\log x)^3+x^{\varepsilon},
\end{align*}
where $d(l)$ is the divisor function.
On the last estimates, we use
\begin{equation*}
\sum_{m\leq x}\frac{\Lambda(m)}{m}=\log x+O(1)
\end{equation*}
and
\begin{equation*}
d(x)\ll x^{\varepsilon}.
\end{equation*}
The second is
\begin{equation*}
\ll \frac{(4x)^a}{U}\sum_{mn=1}^{\infty}\frac{\left|\Lambda(m) \log n\right|}{(mn)^a} \ll \frac{x^a}{U}(\log qT)^3.
\end{equation*}

Therefore
\begin{equation*}
R\ll \frac{x}{U}(\log x)^3+x^{\varepsilon}.
\end{equation*}

Since $L(s,\psi_0)=\zeta(s)\prod_{p \mid q}(1-p^{-s})$, there is an absolute constant $C>0$ such that 
\begin{equation*}
L(s,\psi_0)\neq 0 \quad \text{for} \quad \sigma \geq 1-\frac{C}{\log (|t|+2)}
\end{equation*}
(see \cite[p.172]{Mon&Vau}).
With regard to this zero-free region for $L(s,\psi_0)$, let $a'=1-C/\log U$ and $U=\exp \left(4c_1\sqrt{\log qT}\right)$.
By the residue theorem, the integral is

\begin{align*}
&\quad \frac{1}{2\pi i}\int_{a-iU}^{a+iU}\frac{L'}{L}(s,\psi_0)L'(s,\psi_0)\frac{x^s}{s}ds \\
&=\underset{s=1}{\mathrm {Res}} \frac{L'}{L}(s,\psi_0)L'(s,\psi_0)\frac{x^s}{s} \\
&\quad+\frac{1}{2\pi i} \left \{ \int_{a+iU}^{a'+iU}+\int_{a'+iU}^{a'-iU}+\int_{a'-iU}^{a-iU} \right \} \frac{L'}{L}(s,\psi_0)L'(s,\psi_0)\frac{x^s}{s}ds.
\end{align*}
By an argument similar to the proof of Lemma \ref{horizontal}, we can see that the integral on the horizontal line can be estimated as

\begin{align*}
\int_{a\pm iU}^{a'\pm iU}\frac{L'}{L}(s,\psi_0)L'(s,\psi_0)\frac{x^s}{s}ds
&\ll \frac{(\log qU)^3}{U} x^a (qU)^{\frac{3}{16}+\varepsilon}(a-a') \\
&\ll xU^{-\frac{1}{2}}=x\exp \left(-2c_1\sqrt{\log x}\right),
\end{align*}
noting the condition $q\leq \exp \left(c_1\sqrt{\log T}\right) \leq \exp \left(4c_1\sqrt{\log qT}\right)=U$ and (\ref{log-der-bd}).
Since $L'/L(s,\psi_0)\ll |s-1|^{-1}$ and $L'(s,\psi_0)\ll |s-1|^{-2}$ in the neighbourhood around $s=1$,
the integral on the vertical line can be bounded by

\begin{align*}
&\ll x^{a'}(qU)^{\frac{3}{16}+\varepsilon} (\log qU)^3 \int_{-U}^{U}\frac{dt}{1+|t|}+x^{a'}(\log qU)^3\int_{-1}^{1}\frac{dt}{|a'+it|} \\
&\ll x^{a'}(qU)^{\frac{3}{16}+\varepsilon}(\log U)^4 \\
&\ll x^{a'}U^{\frac{1}{2}}=x\exp \left(\left(2c_1-\frac{C}{4c_1} \right)\sqrt{\log x} \right).
\end{align*}
When we put $c_1=\sqrt{C}/4$, we obtain that
\begin{align*}
&\quad \frac{1}{2\pi i}\int_{a-iU}^{a+iU}\frac{L'}{L}(s,\psi_0)L'(s,\psi_0)\frac{x^s}{s}ds \\
&=\underset{s=1}{\mathrm {Res}} \frac{L'}{L}(s,\psi_0)L'(s,\psi_0)\frac{x^s}{s}+O\left(x\exp \left(-\frac{\sqrt{C}}{2}\sqrt{\log x}\right)\right).
\end{align*}

Note that

\begin{align*}
&\quad \underset{s=1}{\mathrm {Res}} \frac{L'}{L}(s,\psi_0)L'(s,\psi_0)\frac{x^s}{s} \\
&=\frac{1}{2!}\lim_{s\rightarrow 1}\frac{d^2}{ds^2}(s-1)^3\frac{L'}{L}(s,\psi_0)L'(s,\psi_0)\frac{x^s}{s}.
\end{align*}
To calculate this residue, we observe that

\begin{align*}
L'(s,\psi_0) &= \zeta'(s)\prod_{p\mid q}(1-p^{-s})+\zeta(s)\left(\prod_{p\mid q}(1-p^{-s}) \right)' \\
                &= \left(-\frac{1}{(s-1)^2}+\sum_{k=1}^{\infty}\gamma_k k(s-1)^{k-1} \right)\prod_{p\mid q}(1-p^{-s}) \\
                &\qquad +\left(\frac{1}{s-1}+\sum_{k=0}^{\infty}\gamma_k (s-1)^k \right)\left(\prod_{p \mid q}(1-p^{-s})\sum_{p\mid q}\frac{\log p}{p^s-1} \right)
\end{align*}
and
\begin{align*}
\frac{L'}{L}(s,\psi_0) &= \frac{\zeta'}{\zeta}(s)+\sum_{p\mid q}\frac{\log p}{p^s-1} \\
                             &= -\frac{1}{s-1}+\sum_{k=0}^{\infty}\eta_k(s-1)^k+\sum_{p\mid q}\frac{\log p}{p^s-1},
\end{align*}
where $\gamma_k$ is the $k$-th Stieltjes constant and can be defined by the limit
\begin{equation*}
\gamma_k=\lim_{n\rightarrow \infty}\left \{ \left(\sum_{m=1}^{n}\frac{(\log m)^k}{m} \right)-\frac{(\log n)^{k+1}}{k+1}\right \},
\end{equation*}
and $\eta_k$ can be represented by the sum
\begin{equation*}
\eta_k=(-1)^k\left \{ \frac{k+1}{k!}\gamma_k+\sum_{n=0}^{k-1}\frac{(-1)^{n-1}}{(k-n-1)!}\eta_n \gamma_{k-n-1} \right \}.
\end{equation*}

Hence we get

\begin{align*}
&\quad \underset{s=1}{\mathrm {Res}} \frac{L'}{L}(s,\psi_0)L'(s,\psi_0)\frac{x^s}{s} \\
&=\frac{1}{2!}\lim_{s\rightarrow 1}\frac{d^2}{ds^2}\prod_{p\mid q}\left(1-\frac{1}{p^s} \right)\frac{x^s}{s} \\
&\quad-\frac{2}{2!}\lim_{s\rightarrow 1}\frac{d}{ds} \prod_{p \mid q}(1-p^{-s})\left( \sum_{p\mid q}\frac{\log p}{p^s-1}+\eta_0+\sum_{p\mid q}\frac{\log p}{p^s-1} \right)\frac{x^s}{s} \\
&\quad -\frac{2}{2!}\lim_{s\rightarrow 1} \prod_{p\mid q}\left(1-\frac{1}{p^s} \right) \left \{ \gamma_1+\gamma_0\sum_{p\mid q}\frac{\log p}{p^s-1}+\eta_1 \right. \\
&\quad \left. -\sum_{p\mid q}\frac{\log p}{p^s-1}\left(\eta_0+\sum_{p\mid q}\frac{\log p}{p^s-1} \right) \right \}\frac{x^s}{s} \\
&=\frac{\varphi(q)}{q}x\left \{ \frac{1}{2}(\log x)^2-\left(\sum_{p\mid q}\frac{\log p}{p-1}+\gamma_0+1 \right)\log x-\frac{1}{2}\left(\sum_{p\mid q}\frac{\log p}{p-1}\right)^2 \right. \\
&\quad \left.+\frac{3}{2}\sum_{p\mid q}p\left(\frac{\log p}{p-1}\right)^2+(1-\gamma_0)\sum_{p\mid q}\frac{\log p}{p-1}+\gamma_0^2+\gamma_0-3\gamma_1+1 \right \}.
\end{align*}
Therefore we can see that
\begin{align*}
&\quad \frac{\chi(-1)\tau(\chi)}{\varphi(q)}\sum_{mn \leq x}\psi_0(m)\psi_0(n)\Lambda(m)\log n \\
&=\frac{\tau(\overline{\chi})}{q}x\left \{ \frac{1}{2}(\log x)^2-\left(\sum_{p\mid q}\frac{\log p}{p-1}+\gamma_0+1 \right)\log x-\frac{1}{2}\left(\sum_{p\mid q}\frac{\log p}{p-1}\right)^2 \right. \\
&\quad \left.+\frac{3}{2}\sum_{p\mid q}p\left(\frac{\log p}{p-1}\right)^2+(1-\gamma_0)\sum_{p\mid q}\frac{\log p}{p-1}+\gamma_0^2+\gamma_0-3\gamma_1+1 \right \} \\
&\quad +O\left(x\exp \left(-\frac{\sqrt{C}}{2}\sqrt{\log x}\right)\right).
\end{align*}
Here we note that $\tau(\chi)/\varphi(q) \ll 1$.

(ii) In the same way, we obtain
\begin{align*}
&\quad \frac{1}{\varphi(q)^2}\sum_{\substack{\psi' \bmod q\\ \psi' \neq \psi_0'}}\sum_{b=1}^{q}\overline{\psi'}(b)\chi(b)\sum_{a=1}^{q}\chi(a)e\left(-\frac{ab}{q} \right) \\
&\quad \times \sum_{mn \leq x}\psi_0(m)\psi'(n)\Lambda(m)\log n \\
&=\frac{\chi(-1)\tau(\chi)}{\varphi(q)^2}\sum_{\psi' \neq \psi_0'}\sum_{b=1}^{q}\overline{\psi'}(b)\sum_{mn \leq x}\psi_0(m)\psi'(n)\Lambda(m)\log n.
\end{align*}
The sum of $\overline{\psi'}$ is $0$. Hence we see that the sum in this case vanishes.

(iii) This case is the same as the case (ii).

(iv)

\begin{align*}
&\quad \frac{1}{\varphi(q)^2}\sum_{\substack{\psi \neq \psi_0 \\ \psi' \neq \psi_0'}}\sum_{a=1}^{q}\overline{\psi}(a)\chi(a)\sum_{b=1}^{q}\overline{\psi'}(b)\chi(b)e\left(-\frac{ab}{q} \right) \\
&\quad \times \sum_{mn \leq x}\psi(m)\psi'(n)\Lambda(m)\log n \\
&=\frac{1}{\varphi(q)^2}\sum_{\substack{\psi \neq \psi_0 \\ \psi' \neq \psi_0'}}\sum_{a=1}^{q}\overline{\psi}(a)\chi(a)\psi'(-a)\overline{\chi}(-a)\tau(\overline{\psi'}\chi) \\
&\quad \times \sum_{mn \leq x}\psi(m)\psi'(n)\Lambda(m)\log n \\
&=\frac{\chi(-1)}{\varphi(q)^2}\sum_{\substack{\psi \neq \psi_0 \\ \psi' \neq \psi_0'}}\sum_{a=1}^{q}\overline{\psi}(a)\psi'(-a)\tau(\overline{\psi'}\chi)\sum_{mn \leq x}\psi(m)\psi'(n)\Lambda(m)\log n \\
&=\frac{\chi(-1)}{\varphi(q)^2}\sum_{\substack{\psi \neq \psi_0 \\ \psi' \neq \psi_0'}}\psi'(-1)\tau(\overline{\psi'}\chi)\sum_{a=1}^{q}\overline{\psi}(a)\psi'(a)\sum_{mn \leq x}\psi(m)\psi'(n)\Lambda(m)\log n \\
&=\frac{\chi(-1)}{\varphi(q)}\sum_{\psi \neq \psi_0}\psi(-1)\tau(\overline{\psi}\chi)\sum_{mn \leq x}\psi(m)\psi(n)\Lambda(m)\log n.
\end{align*}
To show the last equality, we use the fact that the sum over $a$ does not equal to $0$ if and only if $\psi=\psi'$.

In this case, we know the fact that there is an absolute constant $C'>0$ such that
\begin{equation*}
L(s,\chi)\neq 0 \quad \text{for} \quad \sigma >1-\frac{C'}{\log q(|t|+2)}
\end{equation*}
unless $\chi$ is a quadratic character, in which case $L(s,\chi)$ has at most one, necessarily real, zero $\beta <1$ (see \cite[p. 360]{Mon&Vau}).
By the same argument as in the case (i), when we put $c_1=\sqrt{C'}/4$ we have
\begin{align*}
&\quad \sum_{mn \leq x}\psi(m)\Lambda(m)\psi(n)\log n \\
&=-L'(\beta,\psi)\frac{x^{\beta}}{\beta}+O\left(x\exp \left(-\frac{\sqrt{C'}}{2}\sqrt{\log x}\right)\right)
\end{align*}
when $L(s,\psi)$ with a quadratic character $\omega$ has an exceptional zero $\beta$. If there is no exceptional zero, then the first term vanishes.
Hence when $L(s,\omega)$ has an exceptional zero $\beta$ we have
\begin{align*}
&\quad \frac{\chi(-1)}{\varphi(q)}\sum_{\psi \neq \psi_0}\psi(-1)\tau(\overline{\psi}\chi)\sum_{mn \leq x}\psi(m)\psi(n)\Lambda(m)\log n \\
&=-\frac{\chi(-1)}{\varphi(q)}\omega(-1)\tau(\overline{\omega}\chi)L'(\beta,\omega)\frac{x^{\beta}}{\beta}+O\left(\sqrt{q}x\exp \left(-\frac{\sqrt{C'}}{2}\sqrt{\log x}\right)\right),
\end{align*}
otherwise the main term does not appear.

From the above, when we put $c_1=\min \{\sqrt{C}/4,\sqrt{C'}/4 \}$ and $c=c_1/2$, we have
\begin{align*}
J_3&=-\frac{T}{2\pi}\left \{ \frac{1}{2}\left(\log \frac{qT}{2\pi}\right)^2-\left(\sum_{p\mid q}\frac{\log p}{p-1}+\gamma_0+1 \right)\log \frac{qT}{2\pi}-\frac{1}{2}\left(\sum_{p\mid q}\frac{\log p}{p-1}\right)^2 \right. \\
&\quad \left.+\frac{3}{2}\sum_{p\mid q}p\left(\frac{\log p}{p-1}\right)+(1-\gamma_0)\sum_{p\mid q}\frac{\log p}{p-1}+\gamma_0^2+\gamma_0+\gamma_1+1 \right \} \\
&\quad +\frac{\omega \chi(-1)\tau({\overline{\chi})}\tau(\overline{\omega}\chi)}{q\varphi(q)}\frac{L'(\beta,\omega)}{\beta}\left(\frac{qT}{2\pi}\right)^{\beta}+O\left(T\exp \left(-c\sqrt{\log T}\right)\right).
\end{align*}
We note that $\tau(\chi)\sqrt{q}/q\ll 1$.

To complete the proof, we take away the condition on $T$.
When $T$ increases continuously in $|T-\gamma_{\chi}|\ll (\log qT)^{-1}$, the number of relevant $L'(\rho_{\chi},\chi)$ is at most $O(\log qT)$ and the order of each term is $O((qT)^{\frac{3}{16}+\varepsilon})$.
Thus the contribution of these terms is smaller than the error in our main theorem.
Therefore the proof in the unconditional case is completed.

\section{The conditional estimate}
In this section, we assume the GRH.
We choose $a'=1/2+(\log qT)^{-1}$ and $U=qT$.
In the case (i), by Cauchy's theorem,
\begin{align*}
&\quad \frac{1}{2\pi i}\int_{a-iU}^{a+iU}\frac{L'}{L}(s,\psi_0)L'(s,\psi_0)\frac{x^s}{s}ds \\
&=\underset{s=1}{\mathrm {Res}} \frac{L'}{L}(s,\psi_0)L'(s,\psi_0)\frac{x^s}{s} \\
&\quad +\frac{1}{2\pi i} \left \{ \int_{a+iU}^{a'+iU}+\int_{a'+iU}^{a'-iU}+\int_{a'-iU}^{a-iU} \right \} \frac{L'}{L}(s,\psi_0)L'(s,\psi_0)\frac{x^s}{s}ds,
\end{align*}
The integral on the horizontal line is
\begin{align*}
\int_{a'\pm iU}^{a\pm iU}\frac{L'}{L}(s,\psi_0)L'(s,\psi_0)\frac{x^s}{s}ds
\ll
\frac{x^a}{U}(qU)^{\varepsilon}(\log qU)^3
\ll
(qT)^{\varepsilon}.
\end{align*}

As for the vertical line, we note that
\begin{align*}
\frac{L'}{L}(s,\psi_0)
&=
\frac{\zeta'}{\zeta}(s)+\sum_{p\mid q}\frac{\log p}{p^s-1} \ll \log 2q
%&=
%\sum_{|\gamma-t|\leq1}\frac{1}{s-\rho}+O(\log (q(|t|+2))) \\
%&\ll
%(\log qT)^2
\end{align*}
%and
%\begin{equation*}
%L'(s,\psi_0)=\zeta(s)\prod_{p\mid q}(1-p^{-s})\left(\frac{\zeta'}{\zeta}(s)+\sum_{p\mid q}\frac{\log p}{p^s-1} \right)
%\end{equation*}
for $s=a'+it$ and $0\leq|t|\leq 1$.
Thus
%, by (\ref{log-der-bd}), 
we have
\begin{align*}
&\quad \int_{a'-iU}^{a'+iU}\frac{L'}{L}(s,\psi_0)L'(s,\psi_0)\frac{x^s}{s}ds \\
&=i\int_{-U}^{U} \frac{L'}{L}(a'+it,\psi_0)L'(a'+it,\psi_0)\frac{x^{a'+it}}{a'+it}dt \\
&\ll x^{a'}(\log qU)^3\int_{1}^{U}\frac{(qt)^{\varepsilon}}{t}dt+x^{a'}(\log 2q)^2\int_{-1}^{1}\frac{q^{\varepsilon}}{a'}dt \\
&\ll (qT)^{\frac{1}{2}+\varepsilon}.
\end{align*}

Concerning the case (iv), we can see that
\begin{equation*}
\sum_{nm\leq x}\psi(m)\Lambda(m)\psi(n)\log n\ll (qT)^{\frac{1}{2}+\varepsilon}
\end{equation*}
by the similar argument.
Therefore we can replace the error term in our theorem by $(qT)^{\frac{1}{2}+\varepsilon}$.

\section{The Details of Remark 1}
We consider the case when $q$ is a prime power.
Let $q=p^{\alpha}$, $a'=-(\log qT)^{-1}=1-a$ and $U=qT$.
In the case (i), by the residue theorem
\begin{align*}
&\quad \frac{1}{2\pi i}\int_{a-iU}^{a+iU}\frac{L'}{L}(s,\psi_0)L'(s,\psi_0)\frac{x^s}{s}ds \\
&=\underset{s=1}{\mathrm {Res}} \frac{L'}{L}(s,\psi_0)L'(s,\psi_0)\frac{x^s}{s}+\underset{s=0}{\mathrm {Res}} \frac{L'}{L}(s,\psi_0)L'(s,\psi_0)\frac{x^s}{s} \\
&\quad+\sum_{\substack{\rho \neq 0 \\ |\Im \rho|\leq U}} L'(\rho,\psi_0)\frac{x^{\rho}}{\rho} \\
&\quad+\frac{1}{2\pi i} \left \{ \int_{a+iU}^{a'+iU}+\int_{a'+iU}^{a'-iU}+\int_{a'-iU}^{a-iU} \right \} \frac{L'}{L}(s,\psi_0)L'(s,\psi_0)\frac{x^s}{s}ds,
\end{align*}
where $\rho$ runs over the zeros of $L(s,\psi_0)$.
With regard to the residue at $s=0$, we can see that
\begin{equation*}
\frac{L'}{L}(s,\psi_0)=\frac{\zeta'}{\zeta}(s)+\frac{\log p}{p^s-1} \quad (q=p^{\alpha})
\end{equation*}
and
\begin{equation*}
\frac{\log p}{p^s-1}=\frac{1}{s}\cdot \frac{s\log p}{e^{s\log p}-1}=\frac{1}{s}\sum_{n=0}^{\infty}\frac{B_n}{n!}(s\log p)^n,
\end{equation*}
where $B_n$ is the $n$-th Bernoulli number,
and hence we have
\begin{align*}
&\quad \underset{s=0}{\mathrm {Res}} \frac{L'}{L}(s,\psi_0)L'(s,\psi_0)\frac{x^s}{s} \\
&=\lim_{s\rightarrow 0}\frac{d}{ds}s\frac{L'}{L}(s,\psi_0)L'(s,\psi_0)x^s \\
&=\lim_{s\rightarrow 0}\frac{d}{ds}s\left(\frac{\zeta'}{\zeta}(s)+\frac{1}{s}\sum_{n=0}^{\infty}\frac{B_n}{n!}(s\log p)^n \right)L'(s,\psi_0)x^s \\
&=L''(0,\psi_0)+\left(\frac{\zeta'}{\zeta}(0)+B_1\log p +\log x \right)L'(0,\psi_0) \\
&=3\zeta'(0)\log p-\frac{3}{2}\zeta(0)(\log p)^2+\zeta(0)\log x \ll (\log qT)^2.
\end{align*}
The integral on the horizontal line is
\begin{align*}
&\quad \int_{1-a\pm iU}^{a\pm iU}\frac{L'}{L}(s,\psi_0)L'(s,\psi_0)\frac{x^s}{s}ds \\
&\ll \left \{ \int_{\frac{1}{2}\pm iU}^{a\pm iU}+\int_{1-a\pm iU}^{\frac{1}{2}\pm iU} \right \} \frac{L'}{L}(s,\psi_0)L'(s,\psi_0)\frac{x^s}{s}ds \\
&\ll \frac{x^a}{U}(qU)^{\varepsilon}(\log qU)^3+\frac{x^{\frac{1}{2}}}{U}(qU)^{\frac{1}{2}}(\log qU)^4 \\
&\ll (qU)^{\varepsilon}(\log qU)^3+\sqrt{q}(\log qU)^4.
\end{align*}

On the integral along the vertical line, since $|s-\rho_{\psi_0}|\gg 1$, by (\ref{log-der-sum}), we can see that
\begin{equation*}
\frac{L'}{L}(s,\psi_0)\ll \log q(|t|+2).
\end{equation*}
Therefore we have
\begin{align*}
&\quad \int_{1-a-iU}^{1-a+iU}\frac{L'}{L}(s,\psi_0)L'(s,\psi_0)\frac{x^s}{s}ds \\
&=i\int_{-U}^{U}\frac{L'}{L}(1-a+it,\psi_0)L'(1-a+it,\psi_0)\frac{x^{1-a+it}}{1-a+it}dt \\
&\ll (\log qU)^2\left|\int_{-U}^{U}\zeta(1-a+it)\frac{dt}{1-a+it} \right| \\
&\ll (\log qU)^2\left(\log U\int_{1}^{U} t^{-\frac{1}{2}}dt+\int_{-1}^{1}\frac{dt}{|1-a+it|} \right) \\
&\ll \sqrt{U}(\log qU)^3.
\end{align*}
Here we use the well-known estimate
\begin{equation*}
\zeta(s) \ll (|t|+2)^{\frac{1}{2}}\log (|t|+2) \quad \text{for} \quad -\frac{1}{\log T}\leq \sigma < \frac{1}{2}.
\end{equation*}

The sum over $\rho$ consists of two sums as
\begin{align*}
\sum_{\substack{\rho \neq 0 \\ |\Im \rho|\leq U}} L'(\rho,\psi_0)\frac{x^{\rho}}{\rho}
&=\sum_{|\gamma|\leq U}L'\left(\frac{1}{2}+i\gamma,\psi_0 \right)\frac{x^{\frac{1}{2}+i\gamma}}{\frac{1}{2}+i\gamma} \\
&\quad +\sum_{\substack{\left|\frac{2\pi k}{\log p}\right|\leq U \\ k\neq 0}}L'\left(\frac{2\pi ik}{\log p},\psi_0 \right)\frac{x^{\frac{2\pi k}{\log p}i}\log p}{2\pi ik} \\
&=S_1+S_2,
\end{align*}
say.
Since
\begin{equation*}
L'\left(\frac{1}{2}+i\gamma,\psi_0\right)=\zeta'\left(\frac{1}{2}+i\gamma \right)(1-p^{-\frac{1}{2}-i\gamma}),
\end{equation*}
we have
\begin{align*}
S_1 &\ll x^{\frac{1}{2}}\sum_{\gamma \leq U}\frac{\left|L'\left(\frac{1}{2}+i\gamma,\psi_0 \right) \right|}{\gamma} \\
&\ll x^{\frac{1}{2}}\left(\sum_{\gamma \leq U}\frac{\left|\zeta'\left(\frac{1}{2}+i\gamma \right) \right|^2}{\gamma}\right)^{\frac{1}{2}}\left(\sum_{\gamma \leq U}\frac{1}{\gamma} \right)^{\frac{1}{2}} \\
&\ll x^{\frac{1}{2}}(\log U)^{\frac{7}{2}}
\end{align*}
by partial summation and the fact that
\begin{equation}\label{zeta'-upper}
\sum_{0<\gamma \leq T}\left|\zeta' \left(\frac{1}{2}+i\gamma \right) \right|^2 \asymp T(\log T)^4
\end{equation}
proved by Gonek \cite{Go}.

On the other hand, since
\begin{align*}
L'\left(\frac{2\pi ik}{\log p},\psi_0 \right)
=
\zeta \left(\frac{2\pi ik}{\log p} \right)\log p,
\end{align*}
we see that
\begin{align*}
S_2 \ll (\log p)^2 \sum_{\frac{2\pi k}{\log p}\leq U}\frac{\left|\zeta \left(\frac{2\pi ik}{\log p} \right)\right|}{2\pi k} \ll \sqrt{U}\log U (\log q)^2
\end{align*}
by the estimate
\begin{equation*}
\zeta(s) \ll (|t|+2)^{\frac{1}{2}}\log (|t|+2) \quad \text{for} \quad -\frac{1}{\log T}\leq \sigma < \frac{1}{2}
\end{equation*}
again.
Therefore we can see that
\begin{align*}
&\quad \frac{\chi(-1)\tau(\chi)}{\varphi(q)}\sum_{mn \leq x}\psi_0(m)\psi_0(n)\Lambda(m)\log n \\
&=\frac{\tau(\overline{\chi})}{q}x\left \{ \frac{1}{2}(\log x)^2-\left(\sum_{p\mid q}\frac{\log p}{p-1}+\gamma_0+1 \right)\log x-\frac{1}{2}\left(\sum_{p\mid q}\frac{\log p}{p-1}\right)^2 \right. \\
&\quad \left.+\frac{3}{2}\sum_{p\mid q}p\left(\frac{\log p}{p-1}\right)^2+(1-\gamma_0)\sum_{p\mid q}\frac{\log p}{p-1}+\gamma_0^2+\gamma_0-3\gamma_1+1 \right \} \\
&\quad +O\left(x^{\frac{1}{2}}(\log U)^{\frac{7}{2}} \right).
\end{align*}

As for the case (iv), we need to deal with the Dirichlet $L$-functions with primitive and also imprimitive characters. However, it is sufficient to consider these with only primitive characters, for we put $q=p^{\alpha}$. For primitive characters, the integral on the vertical line can be estimated as
\begin{align*}
&\quad \int_{1-a-iU}^{1-a+iU}\frac{L'}{L}(s,\psi)L'(s,\psi)\frac{x^s}{s}ds \\
&=\int_{1-a-iU}^{1-a+iU} \Delta(s,\psi)\left \{ \left(\frac{\Delta'}{\Delta}(s,\psi) \right)^2L(1-s,\overline{\psi}) \right. \\
&\left. \quad -2\frac{\Delta'}{\Delta}(s,\psi)L'(1-s,\overline{\psi})+\frac{L'}{L}(1-s,\overline{\psi})L'(1-s,\overline{\psi})\right \} \frac{x^s}{s}ds \\
&\ll q^{a-\frac{1}{2}}\left|\int_{-U}^{U}\left(t^{a-\frac{1}{2}}\exp \left(it\log \frac{2\pi e}{qt}\right)+O(t^{a-\frac{3}{2}})\right)\right. \\
&\quad \left. \times \left((\log qU)^2 L(a-it,\psi)+\frac{L'}{L}(a-it,\psi)L'(a-it,\psi)\right)\frac{x^{1-a+it}}{1-a+it}dt\right| \\
&\ll x^{1-a}q^{a-\frac{1}{2}}\left((\log U)^2 \sum_{n=1}^{\infty}\frac{1}{n^a}\left|\int_{1}^{U}\left(t^{a-\frac{3}{2}}\exp \left(it\log \frac{2\pi exn}{qt}\right)+O(t^{a-\frac{5}{2}})\right)dt\right| \right. \\
&\quad \left. +\sum_{m=2}^{\infty}\frac{\Lambda(m)}{m^a}\sum_{n=1}^{\infty}\frac{\log n}{n^a}\left|\int_{1}^{U}\left(t^{a-\frac{3}{2}}\exp \left(it\log \frac{2\pi exmn}{qt}\right)+O(t^{a-\frac{5}{2}})\right)dt\right| \right) \\
&\quad +O(q^{a-\frac{1}{2}}(\log U)^3).
\end{align*}
Since
\begin{equation*}
\frac{d^2}{dt^2}\left(t\log \frac{2\pi exn}{qt}\right)=-t^{-1},
\end{equation*}
by the second derivative test,
\begin{align*}
&\quad \int_{1}^{U}t^{a-\frac{3}{2}}\exp \left(it\log \frac{2\pi exn}{qt}\right)dt \\
&\ll \sum_{l\leq [\log U]+1} \int_{\frac{U}{2^l}}^{\frac{U}{2^{l-1}}}t^{a-\frac{3}{2}}\exp \left(it\log \frac{2\pi exn}{qt}\right)dt \\
&\ll \sum_{l\leq [\log U]+1} 1 \ll \log U.
\end{align*}
Therefore we obtain
\begin{equation*}
\int_{1-a-iU}^{1-a+iU}\frac{L'}{L}(s,\psi)L'(s,\psi)\frac{x^s}{s}ds \ll q^{a-\frac{1}{2}}(\log U)^4.
\end{equation*}

On the sum $S_1$, we assume the estimate (\ref{L'-upper}).
%\begin{equation*}
%\sum_{0<\gamma_{\chi} \leq T}\left|L'(\rho_{\chi},\chi) \right|^{2} \ll T(\log qT)^{4},
%\end{equation*}
%where the implicit constant is absolute.
By partial summation and this assumption, we have
\begin{align*}
S_1 &\ll x^{\frac{1}{2}}\sum_{0<\gamma_{\psi} \leq U}\frac{\left|L'\left(\frac{1}{2}+i\gamma_{\psi},\psi \right) \right|}{\gamma_{\psi}} \\
&\ll x^{\frac{1}{2}}\left(\sum_{0<\gamma_{\psi} \leq U}\frac{\left|L'\left(\frac{1}{2}+i\gamma_{\psi},\psi \right) \right|^2}{\gamma_{\psi}}\right)^{\frac{1}{2}}\left(\sum_{0<\gamma_{\psi} \leq U}\frac{1}{\gamma_{\psi}} \right)^{\frac{1}{2}} \\
&\ll x^{\frac{1}{2}}(\log U)^{\frac{7}{2}}.
\end{align*}
On the other hand, the counterpart of the sum $S_2$ does not appear. When $\psi \pmod q$ is induced by $\psi^{\star} \pmod d$ with $d\mid q$, we see that
\begin{equation*}
L(s,\psi)=L(s,\psi^{\star})\prod_{\substack{p\mid q\\ p\nmid d}}\left(1-\frac{\psi^{\star}(p)}{p^s} \right).
\end{equation*}
However we assume that $q=p^{\alpha}$. Thus the products on the right-hand side is $1$.
Hence there is no zeros on the imaginary axis.

Therefore we can replace the estimate of the error term by
\begin{equation*}
\sqrt{qT}(\log qT)^{\frac{7}{2}}.
\end{equation*}

\section*{Acknowledgement}
I would like to thank my supervisor Professor Kohji Matsumoto for useful advice.
I am grateful to the seminar members for some helpful remarks and discussions.

\end{document}